\newif\ifPDF
\newtheorem{theorem}{Theorem}[section]
\newtheorem{lemma}[theorem]{Lemma}
\newtheorem{proposition}[theorem]{Proposition} 
\newtheorem{remark}[theorem]{Remark} 
\newtheorem{corollary}[theorem]{Corollary}
\newcommand{\supp}{\operatorname{supp}}
\newcommand{\pa}{\partial}
\newcommand{\rarrow}{\rightarrow}
\newcommand{\dis}{\displaystyle}
\newcommand{\disp}{\displaystyle}
\newcommand{\be}{\begin{equation}}
\newcommand{\ee}{\end{equation}}
\newenvironment{keywords}
{\noindent{\bf Key words.}\small}{\par\vspace{1ex}}
\newcommand{\chapterauthor}[1]{%
	{\parindent0pt\vspace*{-25pt}%
		\linespread{1.1}\large\scshape#1%
		\par\nobreak\vspace*{35pt}}
	\@afterheading%
}
\title{An Inverse Hyperbolic Problem with Application to Joint Photoacoustic Parameter Determination}
\author[1]{Shitao Liu \thanks{liul@clemson.edu }}
\author[2]{Gunther Uhlmann
\thanks{gunther@math.washington.edu }}
\author[3]{Yang Yang
\thanks{yangy5@msu.edu}}
\affil[1]{School of Mathematical and Statistical Sciences, Clemson University}
\affil[2]{Department of Mathematics, University of Washington}
\affil[3]{Department of Computational Mathematics, Science and Engineering, Michigan State University}
\date{}
\begin{document}
\maketitle
\begin{abstract}

We consider an inverse problem of recovering a parameter appearing in all levels in a second-order hyperbolic equation from a single boundary measurement. The model is motivated from applications in photoacoustic tomography when one seeks to recover both the wave speed and the initial ultrasound pressure from a single ultrasound signal. In particular, our result shows that the ratio of the initial ultrasound pressure and the wave speed squared uniquely determines both of them respectively. 

\end{abstract}
\begin{keywords}
Inverse hyperbolic problem, Carleman estimate, single boundary measurement, photoacoustic tomography
\end{keywords}

\section{Introduction and Problem Formulation}
Let $\Omega\subset\mathbb{R}^n$, $n\geq 2$, be an open bounded domain with smooth enough (e.g., $C^2$) boundary $\Gamma$. We consider the following second-order hyperbolic equation for $u=u(x,t)$ defined on $Q=\Omega\times(0,T)$, along with initial position $f(x)$, initial velocity $0$ on $\Omega$ and the Dirichlet boundary condition $h$ on $\Sigma=\Gamma\times(0,T)$ that are given in appropriate function spaces:
\begin{equation}\label{1}
\begin{cases}
 u_{tt} - D(x)\Delta u - 2\nabla D(x) \cdot \nabla u - \Delta D(x) u = 0 & \mbox{in } Q \\[2mm]
 u\left(x,0\right) = f(x); \ u_t\left(x,0\right) = 0 & \mbox{in } \Omega \\[2mm]
 u(x,t) = h(x,t) & \mbox{in } \Sigma.
\end{cases}
\end{equation}
Here $D=D(x)$ is a smooth enough function (e.g. $C^2$) 
, $\frac{1}{c_0}\leq D\leq c_0$ for some $c_0>0$, such that $D-1$ is compactly supported in $\Omega$. 

We assume the boundary $\Gamma=\pa\Omega=\overline{\Gamma_0\cup\Gamma_1}$, where $\Gamma_0\cap\Gamma_1=\emptyset$. Here $\Gamma_1$ is referred as the observed part of the boundary where the measurements are taken, and $\Gamma_0$ is referred as the unobserved part of the boundary where we do not have access to make measurements.
In this paper, we are interested in the following inverse problem: Recover the coefficient $D(x)$ from a {\it single} measurement of $\partial_{\nu} u|_{\Gamma_1\times(0,T)}$, i.e., the Neumann boundary trace of the solution $u$ over the observed boundary $\Gamma_1$ over the time interval $(0,T)$. Here $T$ should be sufficiently large due to the finite propagation speed of the system \eqref{1}. 

To make the observed part $\Gamma_1$ of the boundary more precise, in this paper we assume the following standard geometrical assumptions on the domain $\Omega$ and the unobserved part of the boundary $\Gamma_0$:

\smallskip
(A.1) There exists a strictly convex function $d: \overline{\Omega} \rarrow \mathbb{R}$ in the metric $g=D^{-1}(x)dx^2$, and of class $C^3(\overline{\Omega})$, such that the following two properties hold true (through translation and re-scaling if necessary):

\smallskip
(i) The normal derivative of $d$ on the unobserved part $\Gamma_0$ of the boundary is non-positive. Namely,
\begin{equation*}
\frac{\pa d}{\pa\nu}=\langle \mathcal{D}d(x), \nu(x)\rangle\leq 0, \quad  \forall x\in\Gamma_0,
\end{equation*}
where $\mathcal{D}d=\nabla_g d$ for the scalar function $d$ is the gradient vector field on $\Omega$ with respect to the metric $g$.

\medskip
(ii) \begin{equation*} \mathcal{D}^2d(X,X) = \langle \mathcal{D}_X(\mathcal{D}d), X\rangle_g\geq 2|X|_g^2, \ \forall X\in \mathcal{M}_x, \ \min_{x\in\overline{\Omega}}d(x)=m_0>0\end{equation*}
where $\mathcal{D}^2d$ is the Hessian of $d$ (a second-order tensor) and $\mathcal{M}_x$ is the tangent space at $x\in\Omega$.

\smallskip
(A.2) $d(x)$ has no critical point on $\overline{\Omega}$. In other words,
\begin{equation*}
\inf_{x\in\overline{\Omega}}|\mathcal{D}d|>0, \ \mbox{so that we may take} \ \inf_{x\in\overline{\Omega}}\frac{|\mathcal{D}d|^2}{d}>4.
\end{equation*}
\begin{remark}\label{rem2}
The geometrical assumptions above permit the construction of a vector field that enables a pseudo-convex function necessary for allowing a Carleman estimate containing no lower-order terms for the second-order hyperbolic equation (\ref{1}). In fact, this can be done for general second-order hyperbolic equations and metric (see Section 2). These type assumptions were formulated as early as in H\"{o}rmander's classical work on partial differential operators of principal types \cite{H1963}. 
Specifically they were also formulated in 
\cite{TY2002} 
under the more general Riemannian geometry framework.  For examples and detailed illustrations of large general classes of domains and voundaries $\{\Omega, \Gamma_1, \Gamma_0\}$ satisfying the aforementioned assumptions we refer to \cite[Appendix B]{TY2002}. 
\end{remark}

\medskip
Now let us state the main theorem regarding our inverse problem.
\begin{theorem}\label{Th1}
Under the geometrical assumptions (A.1) and (A.2) and let 
\begin{equation}\label{largetime}
T > T_0 := 2\disp\sqrt{\max_{x\in\overline{\Omega}}d(x)}
\end{equation} 
where $d(x)$ is from the geometrical assumption (A.1).
Denote by $u_1$ and $u_2$ the corresponding solutions of the equation (\ref{1}) with different coefficients $D_1$ and $D_2$, respectively. 

Suppose the initial and boundary conditions are in the following function spaces
\begin{equation}\label{regularity}
f\in H^{\gamma}(\Omega), \ h\in H^{{\gamma}}(\Sigma), \ \mbox{where} \ \gamma>\frac{n}{2}+3
\end{equation}
along with all compatibility conditions (trace coincidence) which make sense. 
In addition, suppose the following positivity condition holds: There exists a compact set $K\subset\Omega$ with $C^2$-boundary such that $\text{supp } (D_1-D_2)\subset {\rm Int } K$ (Here ${\rm Int } K$ denotes the interior of $K$) and there exists a positive number $r_0>0$ such that 
\begin{equation}\label{positivity}
|f(x)|\geq r_0 > 0, \ a.e.\ x\in K.
\end{equation}
Then there exists a constant $C = C(\Omega, \Gamma, T, d, c_0)>0$ such that 
\begin{equation}
\|D_1-D_2\|_{\Delta} \leq C \left\|\partial_t\partial_{\nu}u_1-\partial_t\partial_{\nu}u_2\right\|_{L^2(\Gamma_1\times(0,T))}.
\end{equation}
Here the norm $\|\cdot\|_{\Delta}$ is defined by 
\begin{equation}
\|D\|_{\Delta}^2:= \int_{\Omega} \left(|\Delta D|^2 + |\nabla D|^2 + |D|^2\right)\, d\Omega.
\end{equation}
\end{theorem}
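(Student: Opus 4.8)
\noindent The plan is to run the Bukhgeim--Klibanov argument, converting the single-measurement problem into a Carleman estimate for the difference of the two solutions. Set $u:=u_1-u_2$ and $y:=D_1-D_2$. The identity $D\Delta w+2\nabla D\cdot\nabla w+w\,\Delta D=\Delta(Dw)$ rewrites \eqref{1} as $w_{tt}-\Delta(Dw)=0$, so subtracting the equations for $u_1,u_2$ gives
\[
u_{tt}-\Delta(D_1 u)=\Delta(y\,u_2)\ \text{ in }Q,\qquad u|_{t=0}=u_t|_{t=0}=0,\qquad u|_\Sigma=0 .
\]
Since the measured quantity carries a time derivative, I would pass to $v:=\partial_t u$, which satisfies
\[
v_{tt}-\Delta(D_1 v)=\Delta(y\,\partial_t u_2),\qquad v|_{t=0}=0,\qquad v_t|_{t=0}=\Delta(yf),\qquad v|_\Sigma=0,
\]
the key point being that $v_t|_{t=0}=u_{tt}|_{t=0}=\Delta(D_1 u)|_{t=0}+\Delta(yu_2)|_{t=0}=\Delta(yf)$, i.e.\ the unknown coefficient difference surfaces as Cauchy data. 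Moreover $\partial_\nu v|_{\Gamma_1\times(0,T)}=\partial_t\partial_\nu u_1-\partial_t\partial_\nu u_2$ is exactly the datum in the theorem, and because $v_{tt}|_{t=0}=\partial_t^3u|_{t=0}=0$ and $\partial_t u_2|_{t=0}=0$, one may extend $v$ oddly and $u_2$ evenly across $t=0$ to $Q^\ast:=\Omega\times(-T,T)$ without loss of regularity; the hypothesis \eqref{regularity} ensures $u_2\in C([0,T];H^\gamma(\Omega))$, hence $\partial_t u_2(\cdot,t)\in W^{2,\infty}(\Omega)$ uniformly in $t$ (this is exactly where $\gamma>\frac n2+3$ is used).

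Next I would apply the Carleman estimate of Section~2 for the operator $\partial_t^2-\Delta(D_1\,\cdot\,)$ with the pseudo-convex weight $e^{2s\varphi}$ built from $d$ via (A.1)--(A.2), on the cylinder $Q^\ast$. The large-time threshold $T>T_0=2\sqrt{\max_{\overline\Omega}d}$ is precisely what lets the level sets of $\varphi$ fit inside $Q^\ast$, so the time-endpoint contributions at $t=\pm T$ carry the favorable sign and are discarded, while the Neumann trace over the unobserved part $\Gamma_0$ drops by (A.1)(i). For $v$ this yields, for all large $s$,
\[
s\!\int_{Q^\ast}\! e^{2s\varphi}\bigl(|\nabla_{x,t}v|^2+s^2|v|^2\bigr)\,dx\,dt\;\le\;C\!\int_{\Gamma_1\times(-T,T)}\!\! e^{2s\varphi}|\partial_\nu v|^2\;+\;C\!\int_{Q^\ast}\! e^{2s\varphi}\bigl|\Delta(y\,\partial_t u_2)\bigr|^2\,dx\,dt .
\]
Then, multiplying the equation for $v$ by $2e^{2s\varphi}v_t$ and integrating over $\Omega\times(0,T)$, the divergence terms and the $t=T$ slice are absorbed by the left side above while the $t=0$ slice produces $\int_\Omega e^{2s\varphi(x,0)}|v_t(x,0)|^2\,dx=\int_\Omega e^{2sd(x)}|\Delta(yf)|^2\,dx$; combining, for $s$ large,
\[
\int_\Omega e^{2sd(x)}|\Delta(yf)|^2\,dx\;\le\;C\!\int_{\Gamma_1\times(-T,T)}\!\! e^{2s\varphi}|\partial_\nu v|^2\;+\;C\!\int_{Q^\ast}\! e^{2s\varphi}\bigl|\Delta(y\,\partial_t u_2)\bigr|^2\,dx\,dt .
\]

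It remains to extract $\|y\|_\Delta$. Since $\supp y\subset\operatorname{Int} K$ and $|f|\ge r_0$ on $K$, the pointwise bound $|\Delta(yf)|^2\ge\tfrac12 r_0^2|\Delta y|^2-C(|\nabla y|^2+|y|^2)$ holds on $K$ (all terms vanishing off $K$), so the left side controls $\tfrac12 r_0^2\int_\Omega e^{2sd}|\Delta y|^2$ up to lower order. For the source term, $|\Delta(y\,\partial_t u_2)(x,t)|^2\le C(|\Delta y|^2+|\nabla y|^2+|y|^2)$ by the uniform $W^{2,\infty}$-bound on $\partial_t u_2$, and since $e^{2s\varphi(x,t)}\le e^{2sd(x)}$ with $\int_{-T}^T e^{-2sct^2}\,dt\le Cs^{-1/2}$, this term is $\le Cs^{-1/2}\int_\Omega e^{2sd}(|\Delta y|^2+|\nabla y|^2+|y|^2)$. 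Substituting back, absorbing $\int e^{2sd}|\nabla y|^2$ into $\int e^{2sd}|\Delta y|^2$ by a weighted integration by parts, and taking $s$ large enough to absorb the $s^{-1/2}$-terms, I obtain (after fixing $s$ and bounding the weights above and below)
\[
\|y\|_\Delta^2\;\le\;C\,\|\partial_t\partial_\nu u_1-\partial_t\partial_\nu u_2\|_{L^2(\Gamma_1\times(0,T))}^2\;+\;C\,\|y\|_{L^2(\Omega)}^2 ,
\]
and the residual $C\|y\|_{L^2(\Omega)}^2$ is removed by the standard compactness--uniqueness argument, using the uniqueness (vanishing boundary data forces $y\equiv0$) that the same Carleman estimate delivers.

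The step I expect to be the main obstacle is the control of the source $\Delta(y\,\partial_t u_2)$: unlike in potential- or damping-type inverse problems, the unknown here enters the difference equation together with two spatial derivatives, so this term is of the same differential order as the quantity to be estimated and is not a harmless perturbation. It survives only because the $s^{-1/2}$-gain coming from the time localization of the weight beats the extra power of $s$ lost in the Carleman estimate, which is also what forces the regularity $\gamma>\frac n2+3$. The secondary difficulties are converting the weighted $L^2$-control of $\Delta(yf)$ into the full $\|\cdot\|_\Delta$-norm (the positivity $|f|\ge r_0$ is essential at top order, with a compactness argument for the $L^2$ remainder), and checking that $T_0=2\sqrt{\max d}$ is indeed the sharp threshold making the time-endpoint terms negligible.
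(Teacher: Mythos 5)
Your overall strategy is the same as the paper's: reduce to the source problem $u_{tt}-\Delta(D_1u)=\Delta(y\,u_2)$, differentiate in time so that $\Delta(yf)$ appears as Cauchy data at $t=0$, extend evenly/oddly across $t=0$, apply the hyperbolic Carleman estimate of Section~2 to $v=u_t$, and recover $\int_\Omega e^{2sd}|\Delta(yf)|^2dx$ via the Bukhgeim--Klibanov energy identity. Your quantitative $s^{-1/2}$ bound on $\int_{-T}^Te^{-2sct^2}dt$ for the source term is in fact a sharper version of the paper's dominated-convergence $o(1)$ argument, and that part is fine.

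The genuine gap is in the last step, where you pass from $\int_\Omega e^{2sd}|\Delta(yf)|^2dx$ to $\|y\|_\Delta^2$. Your pointwise bound $|\Delta(yf)|^2\ge\tfrac12r_0^2|\Delta y|^2-C(|\nabla y|^2+|y|^2)$ yields the top-order term but leaves the lower-order terms with a \emph{negative} sign and no gain in $s$; the weighted integration by parts you invoke to absorb $\int e^{2sd}|\nabla y|^2$ into $\int e^{2sd}|\Delta y|^2$ produces a term of size $s^2\int e^{2sd}|y|^2$, so you are left with an uncontrolled $L^2$ residual. Your proposed fixes do not close this: (i) the ``standard compactness--uniqueness argument'' requires a linear, continuous data map on a linear class of unknowns, but in Theorem~\ref{Th1} the map $D_2\mapsto\partial_\nu u_2$ is nonlinear (the source $\Delta(y\,u_2)$ contains $u_2$, which itself depends on $D_2=D_1-y$), and even in the linearized source formulation the constant produced would not have the stated dependence $C(\Omega,\Gamma,T,d,c_0)$; (ii) the uniqueness you appeal to is not ``delivered by the same Carleman estimate,'' since an estimate of the form $\|y\|_\Delta^2\le C\|\text{data}\|^2+C\|y\|_{L^2}^2$ at fixed $s$ gives nothing when the data vanish. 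The paper closes this loop differently, and this is its stated novelty: it reads \eqref{wtt0}, i.e. $f\Delta y+2\nabla f\cdot\nabla y+y\,\Delta f=w_{tt}(x,0)$, as an \emph{elliptic} equation for $y$ (dividing by $f$ using $|f|\ge r_0$ on $K\supset\supp y$), and applies the elliptic Carleman estimate \eqref{elliptic} with the same weight $e^{2\tau d}$ and vanishing boundary terms to obtain $\tau\int e^{2\tau d}|\nabla y|^2+\tau^3\int e^{2\tau d}|y|^2\le C\int e^{2\tau d}|w_{tt}(x,0)|^2$ (Proposition~3.1). Because the lower-order terms are then controlled \emph{with positive powers of $\tau$} and the same exponential weight, every residual on the right-hand side (including your $s^{-1/2}$ source term and the $e^{2\tau\sigma}\int w_t^2$ term, using $\sigma<\min_{\overline\Omega}d$) is absorbed for $\tau$ large, yielding the Lipschitz estimate directly with an explicit constant and no compactness argument. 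To repair your proof you would need to replace the pointwise lower bound by this elliptic Carleman step (or an equivalent weighted Poincar\'e-type inequality exploiting $\inf_{\overline\Omega}|\mathcal{D}d|>0$ from (A.2)).
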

 
\noindent{\bf Inverse source problem}. The first step to solve this inverse problem is to convert it into a corresponding {\it inverse source problem}. Indeed, suppose we have two coefficients $D_1(x)$ and $D_2(x)$ with the corresponding solutions $u_1(x,t)$ and $u_2(x,t)$. If we let 
\begin{equation}\label{relation}
F(x) = D_1(x) - D_2(x), \ w(x,t) = u_1(x,t) - u_2(x,t), \ R(x,t) = u_2(x,t)
\end{equation}
then $w=w(x,t)$ is readily seen to satisfy the following mixed problem with homogeneous initial and boundary conditions:
\begin{equation}\label{2}
{
\begin{cases}
w_{tt} - D_1(x)\Delta w - 2\nabla D_1(x)\cdot\nabla w - \Delta D_1(x) w
=S(x,t)
& \mbox{in } Q\\[2mm]
w\left(x,0\right) = w_t\left(x,0\right) = 0 & \mbox{in } \Omega \\[2mm]
w(x,t) = 0 & \mbox{in } \Sigma
\end{cases}
}
\end{equation}
where for convenience we denote on the right-hand side
\begin{equation*}\label{rhs}
    S(x,t) = \Delta F(x) R(x,t) + 2\nabla F(x)\cdot\nabla R(x,t) + F(x)\Delta R(x,t).
\end{equation*}
Here we assume that $D_1(x)$ and $R=R(x,t)$ are given fixed functions, and the coefficient $F(x)$ in the source term $S(x,t)$ is unknown. Then the inverse source problem is to determine $F(x)$ from the Neumann boundary measurement of $w$ over the observed part $\Gamma_1$ of the boundary and over a sufficiently long time interval $(0, T)$.  
More specifically, corresponding with Theorem \ref{Th1}, we will prove the following stability result for the inverse source problem. 

\begin{theorem}\label{Th2}
Under the geometrical assumptions (A.1) and (A.2) and let $T$ satisfy (\ref{largetime}). 
Assume the following regularity and positivity conditions:
\begin{equation}\label{regularity2}
R\in W^{3,\infty}(Q)
\end{equation}
and there exists $r_0>0$ such that 
\begin{equation}\label{positivity1}
|R(x,0)|\geq r_0, \ a.e. \ x\in K. 
\end{equation}
Then there exists $C = C(\Omega, \Gamma, T, R, d, c_0)>0$ such that 
\begin{equation}
\|F\|_{\Delta} \leq C \left\|\partial_t\partial_{\nu}w\right\|_{L^2(\Gamma_1\times(0,T))}.
\end{equation}
\end{theorem}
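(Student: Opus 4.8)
The plan is a Bukhgeim--Klibanov scheme: reduce the estimate to a bound on the Cauchy data of $w$ at $t=0$, and close the argument with the Carleman estimate of Section~2. First I would differentiate \eqref{2} in time and set $v:=w_t$, so that $v$ solves $v_{tt}-D_1\Delta v-2\nabla D_1\cdot\nabla v-\Delta D_1\,v=\Delta\big(F(x)R_t(x,t)\big)$ with $v(\cdot,0)=0$, $v_t(\cdot,0)=w_{tt}(\cdot,0)$, $v|_\Sigma=0$ and $\partial_\nu v|_\Sigma=\partial_t\partial_\nu w|_\Sigma$. Evaluating the equation of $w$ at $t=0$ and using $w(\cdot,0)\equiv0$ gives $w_{tt}(\cdot,0)=\Delta\big(F\,R(\cdot,0)\big)$. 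Writing $G:=F\,R(\cdot,0)$, which is compactly supported in $\mathrm{Int}\,K$, the interior bound $\|G\|_{H^2}\le C\|\Delta G\|_{L^2}$ for compactly supported functions, together with $F=G/R(\cdot,0)$ on $K$ (licit since $|R(\cdot,0)|\ge r_0$ there and $1/R(\cdot,0)\in W^{2,\infty}(K)$), yields $\|F\|_\Delta\le C\|F\|_{H^2(K)}\le C\|v_t(\cdot,0)\|_{L^2(\Omega)}$. Thus everything reduces to bounding $\|v_t(\cdot,0)\|_{L^2(\Omega)}$ by the measurement, i.e.\ to an observability-type inequality for $v$ with the inhomogeneity $\Delta(FR_t)$.

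\textbf{Carleman estimate and extraction of the initial slice.} I would extend $v$ oddly in $t$ to $Q=\Omega\times(-T,T)$ and apply the Carleman estimate of Section~2 with the pseudo-convex weight $\varphi(x,t)=d(x)-\beta t^2$ (the lower-order terms of the operator being absorbed for $\tau$ large). The hypothesis $T>T_0$ makes $\varphi(\cdot,\pm T)<0<m_0\le\varphi(\cdot,0)$, so the time-endpoint contributions — bounded by $\|F\|_\Delta^2$ through a standard energy estimate — are dominated by the main term once $\tau$ is large, while (A.1)(i) discards the integral over the unobserved boundary $\Gamma_0$. To reach $t=0$, since $v(\cdot,0)=0$ I would use $e^{2\tau d}|v_t(\cdot,0)|^2=-\int_0^{t_1}\partial_t\big(e^{2\tau\varphi}\chi\,|v_t|^2\big)\,dt$ with a time cutoff $\chi$, which bounds $\int_\Omega e^{2\tau d}|v_t(\cdot,0)|^2$ by the Carleman left-hand side plus $\tfrac{C}{\tau}\int_Q e^{2\tau\varphi}|v_{tt}|^2$; this last term I would tame for large $\tau$ by writing $v_{tt}$ from the equation as $\Delta(FR_t)$ plus the principal second-order part in $x$, applying a \emph{weighted interior elliptic estimate} slice by slice (again using (A.1)(i) on $\Gamma_0$ and picking up $\partial_\nu v$ on $\Gamma_1$), and absorbing the resulting circular occurrence of $\tfrac1\tau\int e^{2\tau\varphi}|v_{tt}|^2$. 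The outcome is
\[
\int_\Omega e^{2\tau d}\,|v_t(x,0)|^2\,dx\;\le\;C\tau\int_{\Gamma_1\times(-T,T)}e^{2\tau\varphi}|\partial_\nu v|^2\;+\;C\int_Q e^{2\tau\varphi}\big|\Delta(FR_t)\big|^2\;+\;\text{(negligible)}.
\]

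\textbf{Absorbing the source and concluding.} Since $v_t(\cdot,0)=\Delta(FR_0)$ is supported in $K$ and $|R_0|\ge r_0$ there, the left side is $\ge\tfrac{r_0^2}{2}\int_K e^{2\tau d}|\Delta F|^2-C\int_K e^{2\tau d}(|\nabla F|^2+|F|^2)$. I would delete the negative part via the \emph{elliptic} Carleman estimate with the same non-critical weight — available thanks to (A.2) — of the form $\tau\int_K e^{2\tau d}|\nabla F|^2+\tau^3\int_K e^{2\tau d}|F|^2\le C\int_K e^{2\tau d}|\Delta F|^2$, so that for $\tau$ large the left side is $\ge c\int_K e^{2\tau d}(|\Delta F|^2+|\nabla F|^2+|F|^2)$. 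On the other hand $|\Delta(FR_t)|\le C(|\Delta F|+|\nabla F|+|F|)$ pointwise, so integrating the Gaussian in $t$ gives $\int_Q e^{2\tau\varphi}|\Delta(FR_t)|^2\le\tfrac{C}{\sqrt\tau}\int_K e^{2\tau d}(|\Delta F|^2+|\nabla F|^2+|F|^2)$, which for $\tau$ large is absorbed into the left side. What is left, using $\partial_\nu v=\partial_t\partial_\nu w$ and the odd symmetry on $\Gamma_1\times(-T,T)$, is $c\int_K e^{2\tau d}(|\Delta F|^2+|\nabla F|^2+|F|^2)\le C\tau\,e^{2\tau\max_{\overline\Omega}d}\,\|\partial_t\partial_\nu w\|^2_{L^2(\Gamma_1\times(0,T))}$; since $\mathrm{supp}\,F\subset K$, the left side is $\ge e^{2\tau m_0}\|F\|_\Delta^2$ with $m_0>0$, so freezing $\tau$ at one large value gives $\|F\|_\Delta\le C\|\partial_t\partial_\nu w\|_{L^2(\Gamma_1\times(0,T))}$ with $C=C(\Omega,\Gamma,T,R,d,c_0)$.

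\textbf{Where the difficulty lies.} The delicate point is the extraction of the $t=0$ slice when only the \emph{first} time-derivative of the Neumann trace is measured: this forces the term $\int e^{2\tau\varphi}|v_{tt}|^2$ ($=|w_{ttt}|^2$), which is not part of the Carleman left-hand side, and it can be controlled only through the slice-wise weighted elliptic estimate plus a large-$\tau$ absorption. Together with the elliptic Carleman estimate needed to absorb the (second-order-in-$F$) source and the bookkeeping of the $t=\pm T$ terms, this is where the real work is. (One also has to make sure the extension to $(-T,T)$ is legitimate — the data being even in $t$ — or else run the whole argument directly on $(0,T)$ with the corresponding form of the Carleman estimate.)
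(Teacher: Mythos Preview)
Your overall scheme matches the paper's: differentiate \eqref{2} in $t$, apply the hyperbolic Carleman estimate \eqref{carleman} to $v=w_t$, link $v_t(\cdot,0)=w_{tt}(\cdot,0)$ to $F$ via the elliptic Carleman estimate \eqref{elliptic}, and absorb the source through the Gaussian-in-$t$ factor (your $C/\sqrt\tau$ is exactly the paper's $o(1)$ in \eqref{L10}). The even/odd extension, the handling of $E(\pm T)$ by energy estimates, and the elimination of the $\Gamma_0$ boundary terms via (A.1)(i) are all as in the paper.

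The gap is precisely where you flag the difficulty: the extraction of the $t=0$ slice. Once you apply Cauchy--Schwarz to $v_tv_{tt}$ you are stuck with $\tfrac{1}{\tau}\int_Q e^{2\tau\varphi}|v_{tt}|^2=\tfrac{1}{\tau}\int_Q e^{2\tau\varphi}|w_{ttt}|^2$, and your proposed ``weighted interior elliptic estimate slice by slice'' does not close. Writing $v_{tt}=D_1\Delta v+\text{l.o.t.}+S_t$ converts the problem into bounding $\tfrac{1}{\tau}\int e^{2\tau\varphi}|\Delta v|^2$, but there is no elliptic estimate that controls a weighted $L^2$-norm of $\Delta v$ by the Carleman quantities $\tau\!\int e^{2\tau\varphi}(|v_t|^2+|\mathcal{D}v|^2)$ and the $\Gamma_1$-trace; going back through the equation reproduces $|v_{tt}|^2$ with a constant that does \emph{not} shrink with $\tau$, so the ``circular occurrence'' cannot be absorbed.

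The paper avoids this altogether by never isolating $|v_{tt}|^2$. In Lemma~3.2 one writes
\[
\int_\Omega e^{2\tau\varphi(x,0)}w_{tt}^2(x,0)\,dx
=\int_{-T}^{0}\!\!\int_\Omega \partial_t\big(e^{2\tau\varphi}w_{tt}^2\big)+\cdots
=\int_{-T}^{0}\!\!\int_\Omega 2e^{2\tau\varphi}w_{tt}w_{ttt}+\cdots,
\]
then \emph{substitutes the equation} $w_{ttt}=D_1\Delta w_t+\text{l.o.t.}+S_t$ inside the product, integrates $e^{2\tau\varphi}w_{tt}\,D_1\Delta w_t$ by parts in $x$ (the boundary term drops since $w_{tt}|_\Sigma=0$), and recognizes $D_1\nabla w_{tt}\!\cdot\!\nabla w_t=\tfrac12\,D_1\,\partial_t|\nabla w_t|^2$ to integrate once more in $t$. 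The resulting identity \eqref{L1} contains only $w_{tt}$, $\nabla w_t$, $w_t$ with at most a $\tau$-factor, which is exactly the left-hand side of the Carleman estimate \eqref{th1carleman1}. In short: substitute the PDE \emph{before} you estimate, and integrate by parts in both $x$ and $t$; do not split $v_tv_{tt}$ by Cauchy--Schwarz. With that replacement for your extraction step, the rest of your argument goes through verbatim.
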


\textbf{Motivation and Application to Photoacoustic Tomography.} The problem studied in this paper is motivated by an inverse problem arising in the imaging modality of \textit{Photo-Acoustic Tomography (PAT)}. 
PAT illuminates biological tissue with pulsed laser to generates ultrasound through thermo-elastic expansion, and various models have been proposed in the literature based on different experimental setups (see e.g, \cite{acosta2015multiwave,acosta2018thermoacoustic,chervova2016time,cox2007photoacoustic,homan2013multiwave,nguyen2016dissipative,palacios2016reconstruction,stefanov2015multiwave,stefanov2017multiwave,Stefanov2017planar}). In this paper, we are interested in the following: Suppose propagation of the ultrasound in the space is modeled by the initial value problem:
\begin{equation} \label{eq:p}
\begin{cases}
 p_{tt} - c^2(x) \Delta p = 0 & \mbox{ in } \mathbb{R}^n\times (0,T) \\[2mm]
 p\left(x,0\right) = p_0(x); \ p_t\left(x,0\right) = 0 & \mbox{ in } \mathbb{R}^n. \\[2mm]
\end{cases}
\end{equation}
Here $p=p(x,t)$ is the ultrasound pressure, $c(x)>0$ is the wave speed, $p_0(x)$ is the initial ultrasound pressure induced by laser illumination, and $T>0$ is the duration of measurement. 
Let the open bounded domain $\Omega$ represent the biological tissue. It is typically assumed that (1) $c \in C^\infty(\mathbb{R}^n)$ is non-trapping so that all ultrasound signals are detectable from boundary measure, and (2) $p_0$ and $c-1$ are compactly support inside $\Omega$ so that the inhomogeneity exists only inside the tissue. The imaging problem in PAT aims to recover the initial pressure $p_0(x)$ from the measurement of the boundary ultrasound signal $p|_{\partial\Omega\times(0,T)}$.

When the sound speed $c(x)>0$ is known, the imaging problem in PAT is a linear inverse problem and has been extensively studied and well understood. For instance, see~\cite{agranovsky2009on, finch2004determining, finch2009recovering, haltmeier2005filtered, hristova2009time, hristova2008reconstruction, kuchment2008mathematics, kunyansky2007explicit, qian2011an, Stefanov2009variable, Stefanov2011brain, Stefanov2017planar} and the reference therein. 
In contrast, when the sound speed $c(x)>0$ is unknown, the imaging problem results in a nonlinear inverse problem due to the nonlinear dependence on $c$ of the data. In this case, one usually attempts to jointly recover $c$ and $p_0$.
Theoretical results on joint identifiability of both $c$ and $p_0$ are relatively limited~\cite{huang2023piecewise, kian2023determination, knox2018jointdetermination, liu2015jointdetermination, stefanov2013instability}, and additional assumptions are usually required to prove the uniqueness.
Among these results, the combination $p_0 c^{-2}$ arises naturally in the analysis, and is shown to be uniquely identifiable in several cases including: (1) $p_0 c^{-2}$ depends on one less spatial variable~\cite{liu2015jointdetermination}; (2) $p_0 c^{-2}$ is piecewise constant on a disjoint union of special convex domains with known amplitudes~\cite{huang2023piecewise}. 
These facts motivate the following question: \textit{Is the knowledge of the combination $p_0 c^{-2}$ sufficient to determine both $c$ and $p_0$?}

The inverse problem addressed in this paper provides an affirmative answer to this question. Indeed, if we take $\Gamma_1=\partial\Omega$ and $\Gamma_0=\emptyset$ to match the full boundary measurement in PAT, and take
\begin{equation} \label{eq:variable}
u(x,t) = p(x,t) c^{-2}(x), \quad D(x) = c^2(x), \quad f(x) = p_0(x) c^{-2}(x), \quad h(x,t) = p(x,t)|_{\Sigma}.
\end{equation}
Using~\eqref{eq:p}, it is easy to verify that inside $\Omega$, $u$ satisfies the boundary value problem~\eqref{1}. On the other hand, $u=u(x,t)$ satisfies the following exterior boundary value problem outside $\Omega$ (since $p_0$ and $c-1$ are compactly supported in $\Omega$):
\begin{equation} \label{eq:ext}
\begin{cases}
 u_{tt} - \Delta u = 0 & \mbox{in } \mathbb{R}^n\times (0,T)\backslash \overline{Q} \\[2mm]
 u\left(x,0\right) = 0; \ u_t\left(x,0\right) = 0 & \mbox{in } \Omega \\[2mm]
 u(x,t) = h(x,t) & \mbox{in } \Sigma.
\end{cases}
\end{equation}
One can solve this exterior problem to obtain the Neumann boundary trace $\partial_\nu u|_\Sigma$ to the interior problem~\eqref{1}.

Theorem~\ref{Th1} can be applied to obtain novel joint recovery outcomes in PAT when combined with the determination result of $p_0 c^{-2}$. For instance, it is proved that $p_0 c^{-2}$ can be uniquely determined in 3D~\cite{liu2015jointdetermination} and 2D~\cite{huang2023piecewise} if this combination is independent of one of the spatial variables $x_1,\dots,x_n$ and if $T=\infty$. We can strengthen these results to obtain the following uniqueness:
\begin{corollary} \label{thm:PAT}
    Let $n=2 \text{ or } 3$. 
    Suppose 
    \begin{enumerate}
        \item $p_0,\tilde{p}_0 \in H^{\gamma}(\Omega)$ with $\gamma>\frac{n}{2}+3$ are compactly supported in $\Omega$, and $p_0\geq r_0, \tilde{p}_0 \geq r_0$ on $K$ for some $r_0>0$;
        \item $c, \tilde{c}\in C^\infty(\mathbb{R}^n)$ are strictly positive, $c-1, \tilde{c}-1$ are compactly supported in $\Omega$, and the geometrical assumptions (A.1) and (A.2) hold for both $D=c^2$ and $\tilde{D}=\tilde{c}^2$.
    \end{enumerate}
    Suppose the a-priori relation $p_{0} c^{-2} = \tilde{p}_{0} \tilde{c}^{-2}$ holds. If $p|_{\Sigma} = \tilde{p}|_{\Sigma}$, then $c=\tilde{c}$ and $p_0=\tilde{p}_0$.
    
\end{corollary}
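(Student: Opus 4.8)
The plan is to obtain Corollary \ref{thm:PAT} directly from Theorem \ref{Th1} via the substitution \eqref{eq:variable}, the only genuinely new points being the role of the \emph{a priori} relation and the conversion of Dirichlet into Neumann boundary data. I would apply \eqref{eq:variable} both to $(c,p_0,p)$ and to $(\tilde c,\tilde p_0,\tilde p)$, setting $u_1=p\,c^{-2}$, $u_2=\tilde p\,\tilde c^{-2}$, $D_1=c^2$, $D_2=\tilde c^2$, $f_1=p_0 c^{-2}$, $f_2=\tilde p_0\tilde c^{-2}$, $h_1=p|_{\Sigma}$, $h_2=\tilde p|_{\Sigma}$; as in the discussion following \eqref{eq:variable}, \eqref{eq:p} shows $u_i$ solves \eqref{1} inside $\Omega$ with coefficient $D_i$, initial position $f_i$, zero initial velocity and boundary value $h_i$. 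The key observation is then that the hypothesis $p_0 c^{-2}=\tilde p_0\tilde c^{-2}$ says exactly $f_1=f_2=:f$, while $p|_{\Sigma}=\tilde p|_{\Sigma}$ says exactly $h_1=h_2=:h$. Thus $u_1$ and $u_2$ are the solutions of \eqref{1} carrying one and the same data $(f,h)$ but the two a priori different coefficients $D_1,D_2$ --- precisely the configuration of Theorem \ref{Th1}.

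Next I would verify the hypotheses of Theorem \ref{Th1}. Fix $T$ satisfying \eqref{largetime} with $d$ the strictly convex function attached to $D_1=c^2$, and take $K$ as in Theorem \ref{Th1} (enlarged if necessary so that $\supp(c^2-\tilde c^2)\subset{\rm Int}\,K$, possible since $c-1,\tilde c-1$ have compact support in $\Omega$). Because $c^{-2}$ is smooth, equal to $1$ near $\Gamma$ and has bounded derivatives on $\overline\Omega$, multiplication by it preserves Sobolev regularity, so $f\in H^\gamma(\Omega)$ and $h\in H^\gamma(\Sigma)$ follow from \eqref{regularity} for $p_0$ together with standard well-posedness and trace bounds for \eqref{eq:p}; the compatibility conditions hold because $p_0$ vanishes near $\Gamma$. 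The positivity \eqref{positivity} holds with constant $r_0 c_0^{-1}$, since $|f|=|p_0|\,c^{-2}\ge r_0 c_0^{-1}>0$ a.e.\ on $K$ (using $p_0\ge r_0$ and $c^2\le c_0$), and the geometrical assumptions (A.1)--(A.2) for $D_1=c^2$ are assumed.

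For the boundary data, I would argue that since $c-1$ and $\tilde c-1$ are compactly supported in $\Omega$, $c=\tilde c=1$ near $\Gamma$, so $u_1=p$ and $u_2=\tilde p$ there and hence $\partial_\nu u_1|_{\Sigma}=\partial_\nu p|_{\Sigma}$, $\partial_\nu u_2|_{\Sigma}=\partial_\nu\tilde p|_{\Sigma}$. The restriction of $p$ to the exterior of $\Omega$ solves the exterior problem \eqref{eq:ext} with Dirichlet data $h$ and zero Cauchy data; this problem is well posed, so its solution --- and in particular the normal trace $\partial_\nu p|_{\Sigma}$, which by \eqref{regularity} is a well-defined $L^2$ function agreeing whether computed from inside or outside --- is determined by $h$. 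The same applies to $\tilde p$ with the identical data $\tilde h=h$, so $\partial_\nu p|_{\Sigma}=\partial_\nu\tilde p|_{\Sigma}$ and therefore $\partial_t\partial_\nu u_1=\partial_t\partial_\nu u_2$ on $\Gamma_1\times(0,T)$. Theorem \ref{Th1} then yields $\|D_1-D_2\|_{\Delta}=0$, i.e.\ $c^2=\tilde c^2$ on $\Omega$; since $c,\tilde c>0$ this gives $c=\tilde c$, and then $p_0=f c^2=f\tilde c^2=\tilde p_0$ from the \emph{a priori} relation.

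The one step I expect to demand real care --- the rest being bookkeeping --- is this last conversion of the Dirichlet measurement $p|_{\Sigma}=\tilde p|_{\Sigma}$ into the Neumann datum that Theorem \ref{Th1} consumes: it rests on the well-posedness of the exterior wave problem \eqref{eq:ext} and on enough regularity (furnished by \eqref{regularity}) for $\partial_\nu p|_{\Sigma}$ to be a genuine $L^2(\Gamma_1\times(0,T))$ function whose inside and outside traces coincide. Reducing to Theorem \ref{Th1} and checking its hypotheses should then be routine.
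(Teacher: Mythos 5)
Your proposal is correct and follows essentially the same route as the paper: use the a-priori relation to get $f=\tilde f$, use the equal Dirichlet measurements plus the exterior problem \eqref{eq:ext} (where $c=\tilde c=1$ near $\Gamma$) to get equal Neumann data, apply Theorem \ref{Th1} to conclude $D=\tilde D$, and then recover $p_0=\tilde p_0$ from the a-priori relation. In fact you spell out the hypothesis-checking and the Dirichlet-to-Neumann conversion in more detail than the paper's own (very terse) proof, which delegates those points to the discussion surrounding \eqref{eq:variable} and \eqref{eq:ext}.
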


\textbf{Brief Literature on Inverse Hyperbolic Problems}.
Recovering coefficients in a second-order hyperbolic equation from making appropriate boundary measurements is an important type of inverse problems and it is often referred as an {\it inverse hyperbolic problem}. Such problems have been studied extensively since the 1980s and here we only mention the monographs and lecture notes \cite{B-Y2017,Isakov2006,Klibanov2021,Klibanov2004,LRS1986,Liu-T2013} and refer to the substantial lists of references therein.
The classical inverse hyperbolic problems usually involve recovering a single unknown coefficient, typically the damping coefficient or the potential coefficient, from a {\it single} boundary measurement of the solution \cite{BCIY2001,Imanuvilov2001a,Imanuvilov2003,LiuT2011-2,Liu2012,Y1999}. It is also possible to recover the variable wave speed, which would require the use of Riemannian geometry as the unknown wave speed is at the principal order level, through a single boundary measurement \cite{B2004, Liu2013, stefanov2013recovery}. In the case of a vector-valued unknown gradient coefficient, one may recover it by properly making $n$ sets of boundary measurements \cite{Jellali2006}. In the recent work \cite{LPS2023}, it was shown that all the aforementioned coefficients can be recovered all together at once by appropriately choosing finitely many initial conditions and measuring the corresponding boundary data.    

In our present model, the unknown coefficients appear in front of the principal order, first order and zeroth order level terms. As in \cite{LPS2023}, it is possible to recover those coefficients from finitely many boundary measurements.  
Nevertheless, since the unknown coefficients are all coming from the same function $D(x)$, one may still expect to be able to recover the function from just a single boundary measurement. A similar example to our inverse problem is to recover the coefficient $a(x)$ in the acoustic wave equation $u_{tt}=div \left(a(x)\nabla u\right)$, where the unknown coefficient $a(x)$ is involved together with its gradient. Indeed, \cite{Imanuvilov2003} and \cite{KY2006} proved the Lipschitz stability of recovering $a(x)$, from only a single boundary measurement when $a(x)$ satisfies appropriate conditions.

The standard approach for single measurement type inverse hyperbolic problems typically requires using the Carleman-type estimates for the underlying second-order hyperbolic equations. To certain extent, all such approaches can be seen as variations or improvements of the so called Bukhgeim--Klibanov (BK) method which was originated in the seminal paper \cite{BK1981}, see also \cite{Klibanov1992}. 
Our approach to solve the present inverse problem uses a more recent variation of the BK method that combines 
a Carleman estimate for general second-order hyperbolic equations \cite{TY2002} and a post Carleman estimate route introduced in \cite{HIY2020}. In particular, a novelty in our proof is that we also use a Carleman estimate for the second-order elliptic equation in the process (see Proposition~3.1 below) due to the specific structure of our model.

The rest of the paper is organized as follows. In the next section we recall some necessary tools for general second-order elliptic and hyperbolic equations to solve the inverse problem. 
In Section 3 we provide the proofs of Theorems \ref{Th1} and \ref{Th2}. 

\section{Some Preliminaries}

In this section we recall some key ingredients of the proofs used in the next section. This includes the Carleman estimates for the general second-order elliptic and hyperbolic equations defined on a Riemannian manifold, as well as the standard {\it a-priori} energy estimates and regularity theory for the general second-order hyperbolic equation with Dirichlet boundary condition. 
For simplicity here we only state the main results and refer to \cite{LLT1986,LM1972,TX2007,TY2002} for greater details.

To begin with, consider a Riemannian metric $g(\cdot,\cdot)=\langle \cdot, \cdot \rangle$ and squared norm $|X|^2=g(X,X),$ on a smooth finite dimensional manifold $M$. On the Riemannian manifold $(M, g)$ we define $\Omega$ as an open bounded, connected set of $M$ with smooth boundary $\Gamma=\overline{\Gamma_0\cup\Gamma_1}$, where $\Gamma_0\cap\Gamma_1=\emptyset$.  Let $\nu$ denote the unit outward normal field along the boundary $\Gamma$.  Furthermore, we denote by $\Delta_g$ the Laplace--Beltrami operator on the manifold $M$ and by $\mathcal{D}$ the Levi--Civita connection on $M$.  

Consider the following second-order elliptic equation with energy level terms defined on $\Omega$:
\be\label{mainequation2}
\Delta_g u(x,t) + F(u) = G(x), \quad x\in \Omega
\ee
where the forcing term $G\in L^2(\Omega)$ and the energy level term $F(u)$ is given by
\begin{equation*}\label{fw}
F(u)= \langle{\bf P}(x), \mathcal{D}u\rangle+P_0(x)u
\end{equation*}
with $P_0$ being a function and ${\bf P}$ being a vector field on $\Omega$ that satisfy there exists a constant $C>0$ such that
\begin{equation*}
|F(u)|\leq C[u^2+|\mathcal{D}u|^2], \ \forall x\in\Omega.
\end{equation*}

\noindent\textbf{Carleman estimate for the general second-order elliptic equations}. Under the geometrical assumptions (A.1) and (A.2), which guarantees the existence of the strict convex function $d$ with the general metric $g$. For a solution of the above equation (\ref{mainequation2}) that satisfies $u\in H^1(\Omega)$ and $\langle\mathcal{D}u, \nu\rangle\in L^2(\partial\Omega)$, and for arbitrary $\epsilon>0$ and $0<\delta_0<1$, we have the following Carleman estimate \cite[Corollary 4.2]{TX2007}:

\begin{equation}\label{elliptic}
K_{1,\tau} \int_{\Omega}e^{2\tau d} |\mathcal{D} u|^2\,d\Omega + K_{2,\tau} \int_{\Omega}e^{2\tau d} |u|^2\,d\Omega \leq \left(1+\frac{1}{\epsilon}\right) \int_{\Omega}e^{2\tau d} |G|^2\,d\Omega + BT(u).
\end{equation}
where the constants $K_{1,\tau}$ and $K_{2,\tau}$ are given by
\begin{equation}\label{Ks}
K_{1,\tau} = \delta_0 \left(2\rho\tau - \frac{\epsilon}{2}\right), \quad\quad K_{2,\tau} = 4\rho k^2\tau^3(1-\delta_0) + O(\tau^2).
\end{equation}
In addition, the boundary terms $BT(u)\equiv 0$ whenever the Cauchy data of $u$ vanish on the boundary $\partial\Omega$, i.e., $u=\langle\mathcal{D}u, \nu\rangle=0$ on $\partial\Omega$.

Next let us consider the second-order hyperbolic equation with energy level terms defined on $Q_T = \Omega\times(-T,T)$ for some $T>0$:
\be\label{mainequation}
w_{tt}(x,t) - \Delta_g w(x,t) + \widetilde{P}(w) = \widetilde{G}(x,t), \quad (x, t)\in Q_T
\ee
where $\widetilde{G}\in L^2(Q_T)$ and $\widetilde{P}(w)$ is given by
\begin{equation}\label{fw}
\widetilde{P}(w)= \langle{\bf \widetilde{P}}(x,t), \mathcal{D}w\rangle +\widetilde{P}_1(x,t)w_t+\widetilde{P}_0(x,t)w.
\end{equation}
Here $\widetilde{P}_0$, $\widetilde{P}_1$ are functions on $Q_T$, ${\bf \widetilde{P}}$ is a vector field on $\Omega$ for $t\in(-T,T)$, and they satisfy the following estimate: There exists a constant $C_T>0$ such that 
\begin{equation}\label{fwproperty}
|\widetilde{P}(w)|\leq C_T[w^2+w_t^2+|\mathcal{D}w|^2], \ \forall(x, t)\in Q_T.
\end{equation}

\noindent\textbf{Carleman estimate for general second-order hyperbolic equations}. 
Having chosen the strictly convex function $d(x)$ as in the geometric assumption (A.1) with respect to a general metric $g$, we can define the function $\varphi(x,t): \Omega\times\mathbb{R}\to\mathbb{R}$ of class $C^3$
by setting
\begin{equation*}\label{pseudo1}
\dis\varphi(x,t)=d(x)-ct^2, \quad x\in\Omega, \  t\in(-T,T),
\end{equation*}
where $T>T_0$ as in (\ref{largetime}). Moreover, $c\in(0,1)$ is selected as follows: Let $T>T_0$ be given, then there exists $\delta>0$ such that
\begin{equation*}\label{pseudo3}
\dis T^2>4\max_{x\in\overline{\Omega}}d(x)+4\delta.
\end{equation*}
For this $\delta>0$, there exists a constant $c\in(0,1)$, such that
\begin{equation*}\label{pseudo4}
\dis cT^2>4\max_{x\in\overline{\Omega}}d(x)+4\delta.
\end{equation*}
It is easy to check such function $\varphi(x,t)$ carries the following properties:

\medskip
(a) For the constant $\delta>0$ fixed above, we have 
\begin{equation}\label{pseudoproperty1}
\dis\varphi(x,-T)=\varphi(x,T) \leq\max_{x\in\overline{\Omega}}d(x)-cT^2\leq-\delta
\ \text{uniformly in} \ x\in\Omega;
\end{equation}
and
\begin{equation}\label{pseudoproperty2}
\dis\varphi(x,t)\leq\varphi\left(x,0\right), \quad \text{for any} \ t\in(-T,T) \ \text{and any} \ x\in\Omega.
\end{equation}

(b) There are $t_0$ and $t_1$, with $-T<t_0<0<t_1<T$, say, chosen symmetrically about $0$, such that
\begin{equation}\label{pseudoproperty3}
\min_{x\in\overline{\Omega},t\in[t_0,t_1]}\varphi(x,t)\geq\sigma,
\quad \text{where} \ 0<\sigma<m_0 = \min_{x\in\overline{\Omega}}d(x).
\end{equation}
Moreover, let $Q(\sigma)$ be the subset of $Q_T$ defined by
\begin{equation}\label{qsigma}
Q{(\sigma)}=\{(x,t): \varphi(x,t)\geq\sigma>0, x\in\Omega, -T< t< T\}.
\end{equation}

We now return to the equation \eqref{mainequation}, and consider solutions $w(x,t)$ in the class
\begin{equation*}\label{h1}
\begin{cases}
w\in H^{1,1}(Q) = L^2((-T,T);H^1(\Omega))\cap H^1((-T,T);L^2(\Omega)); \\[2mm]
w_t, \frac{\pa w}{\pa\nu}\in L^2((-T, T); L^2(\Gamma)).
\end{cases}
\end{equation*}

Then for these solutions with geometrical assumptions (A.1) and (A.2) on $\Omega$, the following one-parameter family of estimates hold true, with $\beta>0$ being a suitable constant ($\beta$ is positive by virtue of (A.2)), for all $\tau>0$ sufficiently large and $\epsilon>0$ small \cite[Theorem 5.1]{TY2002}:
\begin{multline}\label{carleman}
BT(w)+2\int_{Q}e^{2\tau\varphi}|\widetilde{G}|^2\,dQ+C_{1,T}e^{2\tau\sigma}\int_{Q}w^2\,dQ+ c_T\tau^3e^{-2\tau\delta}[E_w(-T)+E_w(T)] \\[2mm]
\geq C_{1,\tau}\int_Qe^{2\tau\varphi}[w_t^2+|\mathcal{D}w|^2]\,dQ + C_{2,\tau}\int_{Q{(\sigma)}}e^{2\tau\varphi}w^2\,dxdt
\end{multline}
where
\begin{equation}\label{c1tauc2tau}
C_{1,\tau}=\tau\epsilon(1-\alpha)-2C_T, \quad C_{2,\tau}=2\tau^3\beta+\mathcal{O}(\tau^2)-2C_T.
\end{equation}
Here $\delta>0$, $\sigma>0$ are the constants as in above, $C_T$, $c_T$ and $C_{1,T}$ are positive constants depending on $T$, as well as $d$ (but not on $\tau$).
The energy function $E_w(t)$ is defined as
\begin{equation}\label{energy1}
E_w(t)=\int_{\Omega}[w^2(x,t)+w_t^2(x,t)+|\mathcal{D}w(x,t)|^2]\,d\Omega.
\end{equation}
In addition, $BT(w)$ stands for boundary terms and can be explicitly calculated as
\begin{eqnarray}\label{boundary}
BT(w) & = & 2\tau\int_{\Sigma}e^{2\tau\varphi}\left(w_t^2-|\mathcal{D}w|^2\right)\langle \mathcal{D}d,\nu\rangle\,d\Sigma \nonumber\\[2mm]
&+& 4\tau\int_{\Sigma}e^{2\tau\varphi}\langle \mathcal{D}d, \mathcal{D}w\rangle\langle \mathcal{D}w,\nu\rangle\,d\Sigma + 8\alpha\tau\int_{\Sigma}e^{2\tau\varphi}tw_t \langle \mathcal{D}w,\nu\rangle\,d\Sigma \nonumber\\[2mm]
&+& 4\tau^2\int_{\Sigma}e^{2\tau\varphi}\left[|Dd|^2-4\alpha^2t^2+\frac{\Delta d-\alpha-1}{2\tau}\right]w \langle \mathcal{D}w,\nu\rangle\,d\Sigma \nonumber \\[2mm]
&+& 2\tau\int_{\Sigma}e^{2\tau\varphi}\left[2\tau^2\left(|\mathcal{D}d|^2-4\alpha^2t^2\right)+ \tau(3\alpha+1)\right]w^2\langle \mathcal{D}d,\nu\rangle\,d\Sigma.
\end{eqnarray}

\noindent\textbf{A-priori energy estimate and regularity theory for general second-order hyperbolic equations with Dirichlet boundary condition}.
Consider the second-order hyperbolic equation (\ref{mainequation}) with initial conditions $w(x, 0)=w_0(x)$, $w_t(x,0)=w_1(x)$ and Dirichlet boundary condition $w(x,t) = h(x,t)$ on $\Sigma_T = \Gamma\times(-T,T)$. If $\widetilde{F}$ satisfies (\ref{fw}), (\ref{fwproperty}),
then the following {\it a-priori} estimate holds true for the solutions $w$: there exists $C = C(\Omega, \Gamma, T)>0$ such that
\begin{equation}\label{energyestimate}
E_w(t) \leq C \left(\left\|w_0\right\|_{H^1(\Omega)}^2 + \left\|w_1\right\|_{L^2(\Omega)}^2 + \left\|h\right\|_{H^1(\Sigma_T)} + \left\|\widetilde{G}\right\|_{L^2(Q_T)}^2\right), \ \forall t\in(-T, T)
\end{equation}
where the energy is defined as in (\ref{energy1}).

In addition, the following interior and boundary regularity results for the solution $w$ hold true:
For $\gamma\geq 0$ (not necessarily an integer), if the given data satisfies the following regularity assumptions 
\begin{equation*}
\begin{cases}
\widetilde{G}\in L^1(0,T; H^{\gamma}(\Omega)), \ \pa_t^{(\gamma)}\widetilde{G}\in L^1(0,T; L^2(\Omega)), \\[2mm]
w_0\in H^{{\gamma}+1}(\Omega), \ w_1\in H^{\gamma}(\Omega), \ h\in H^{{\gamma}+1}(\Sigma_T)
\end{cases}
\end{equation*}
with all compatibility conditions (trace coincidence) which make sense. Then, we have the following regularity for the solution $w$:
\be\label{reg}
w\in C([0,T]; H^{{\gamma}+1}(\Omega)), \ \pa_t^{({\gamma}+1)}w\in C([0,T]; L^2(\Omega)); \ \frac{\pa w}{\pa\nu}\in H^{\gamma}(\Sigma_T).
\ee




\section{Proof of Main Theorems}

In this section we give the main proofs of the results established in the first section. We focus on proving Theorem \ref{Th2} for the inverse source problem since Theorems \ref{Th1} of the original inverse problem will then follow from the relation \eqref{relation} between the two problems with the regularity theory of the second-order hyperbolic equations.
Henceforth for convenience we use $C$ to denote a generic positive constant which may depend on $\Omega$, $T$, $D$, $D_1$, $r_0$, $w$, $u$, $R$, but not on the free large parameter $\tau$ appearing in the Carleman estimates recalled in Section 2. Moreover, we simply denote $K_{1,\tau}$, $C_{1,\tau}$ as $\tau$, and $K_{2,\tau}$, $C_{2,\tau}$ as $\tau^3$, respectively, see (\ref{Ks}) and (\ref{c1tauc2tau}).

\medskip 
\noindent\textbf{Proof of Theorem~\ref{Th2}}. 
We return to the $w$-equation (\ref{2}), extend $w$ and $R$ as even functions to $(-T,0)$, and get  

\begin{equation}\label{w}
{
\begin{cases}
w_{tt} - D_1(x)\Delta w - 2\nabla D_1(x)\cdot\nabla w - \Delta D_1(x) w
=S(x,t)
& \mbox{in } Q_T = \Omega\times(-T,T)\\[2mm]
w\left(x,0\right) = w_t\left(x,0\right) = 0 & \mbox{in } \Omega \\[2mm]
w(x,t) = 0 & \mbox{in } \Sigma_T = \Gamma\times(-T,T)
\end{cases}
}
\end{equation}
where again we denote
\begin{equation}\label{rhs}
    S(x,t) = \Delta F(x) R(x,t) + 2\nabla F(x)\cdot\nabla R(x,t) + F(x)\Delta R(x,t).
\end{equation}
Differentiate the above system in time $t$, we get the $w_t$-equation
\begin{equation}\label{wt}
{
\begin{cases}
(w_t)_{tt} - D_1(x)\Delta w_t - 2\nabla D_1(x)\cdot\nabla w_t - \Delta D_1(x) w_t
=S_t(x,t)
& \mbox{in } Q_T\\[2mm]
w_t\left(x,0\right) =0, \ (w_t)_t\left(x,0\right) = S(x,0) & \mbox{in } \Omega \\[2mm]
w_t(x,t) = 0 & \mbox{in } \Sigma_T
\end{cases}
}
\end{equation}

Note since $D_1\in C^{2}(\Omega)$, the equation in (\ref{wt}) can be written as a Riemannian wave equation with respect to the metric $g={D_1^{-1}(x)}dx^2$, modulo lower-order terms
\begin{equation*}
(w_t)_{tt} - \Delta_g w_t + \mbox{``lower-order terms''} = S_t(x,t).
\end{equation*}
(More precisely, here $\disp \Delta_g w_t = D_1(x)\Delta w_t + D_1(x)^{\frac{n}{2}}\nabla(D_1^{\frac{2-n}{2}})\cdot\nabla w_t$).
Furthermore, by the regularity assumption \eqref{regularity2}, we have that $S_t\in L^2(Q)$ and $S(x,0)\in L^2(\Omega)$.
Thus we may apply the Carleman estimate \eqref{carleman} for solution $w_t$ and get the following inequality for sufficiently large $\tau$:
\begin{eqnarray}\label{th1carleman1}
& \ & \tau\int_{Q_T}e^{2\tau\varphi}[(w_{tt})^2+|\mathcal{D} w_t|^2]\,dtdx+\tau^3\int_{Q{(\sigma)}}e^{2\tau\varphi}(w_t)^2dtdx \nonumber\\[2mm]
& \leq & BT(w_t)+Ce^{2\tau\sigma}\int_{Q_T}w_t^2\,dtdx + C\tau^3e^{-2\tau\delta}[E(-T)+E(T)] \nonumber \\
& \ & + C\int_{Q_T}e^{2\tau\varphi}\left[(\Delta F)^2R_t^2+|\nabla F|^2|\nabla R_t|^2+F^2(\Delta R_t)^2\right]\,dtdx.
\end{eqnarray}
Here $\sigma>0$ and $\delta>0$ are parameters as in Section 2, the energy function is given by
\begin{equation*}\label{energy}
E(t)=\int_{\Omega}[w_t^2(x,t)+w_{tt}^2(x,t)+|\mathcal{D}w_t(x,t)|^2]\,d\Omega
\end{equation*}
and the gradient vector field is given by $\mathcal{D}w_t = D_1(x) \nabla w_t$ and
\begin{equation}\label{gradientnorm}
|\mathcal{D}w_t|^2 = \langle \mathcal{D}w_t, \mathcal{D}w_t\rangle_g = \sum_{i=1}^n\frac{1}{D_1(x)}\left(D_1(x)\frac{\partial w_t}{\partial x_i}\right)\left(D_1(x)\frac{\partial w_t}{\partial x_i}\right) = D_1(x)|\nabla w_t|^2.
    \end{equation}
In addition, note from (\ref{w}) and (\ref{rhs}) we have 
\begin{equation}\label{wtt0}
w_{tt}(x,0) = R(x,0) \Delta F(x) + 2\nabla R(x,0)\cdot\nabla F(x) + \Delta R(x,0) F(x).
\end{equation}
For this equation, by the assumption $|R(x,0)|\geq r_0>0$ a.e. on $K$, we have the following Carleman type estimate:
\begin{proposition}
With the assumption (\ref{positivity1}), there exists $C = C(\Omega, \Gamma, R, d)>0$ such that for any $F\in H^2_0(\Omega)$ satisfying (\ref{wtt0}) we have for all $\tau>0$ large enough
\begin{equation}\label{ellipticcarleman}
\int_{\Omega}e^{2\tau d} \left(|\Delta F|^2 + |\nabla F|^2 + |F|^2\right)\,dx \leq C \int_{\Omega} e^{2\tau d}w_{tt}^2(x,0)\,dx
\end{equation}
where $d=d(x)$ is the function from the geometrical assumption (A.1).
\end{proposition}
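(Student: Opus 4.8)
The plan is to read the identity \eqref{wtt0} as an \emph{elliptic} equation for the unknown $F$ whose right-hand side is controlled pointwise by $w_{tt}(\cdot,0)$, and then to invoke the elliptic Carleman estimate \eqref{elliptic}. Concretely, I would first recast \eqref{wtt0} into the standard form $\Delta_g F+\Phi(F)=G$. Since $F$ is supported in ${\rm Int}\,K$ and, by \eqref{positivity1}, $|R(x,0)|\ge r_0$ a.e.\ on $K$, on $\operatorname{supp}F$ we may divide \eqref{wtt0} by $R(x,0)$ and solve for the Laplacian,
\[
\Delta F=\frac{w_{tt}(x,0)}{R(x,0)}-\frac{2\nabla R(x,0)\cdot\nabla F+\Delta R(x,0)\,F}{R(x,0)},
\]
and both sides vanish a.e.\ outside $\operatorname{supp}F$ (there $F,\nabla F,\Delta F$ vanish, hence so does $w_{tt}(\cdot,0)$ by \eqref{wtt0}), so this holds on all of $\Omega$. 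Writing the principal part through the Laplace--Beltrami operator of $g=D_1^{-1}(x)\,dx^2$, exactly as was done above for the $w_t$-equation (so that $D_1\Delta F=\Delta_g F+(\text{first order in }\nabla F)$), turns this into $\Delta_g F+\Phi(F)=G$ on $\Omega$, with energy-level term $\Phi(F)=\langle\mathbf P,\mathcal D F\rangle+P_0F$ and forcing $G=\dfrac{D_1(x)}{R(x,0)}\,w_{tt}(x,0)$. By \eqref{regularity2} the traces $\nabla R(\cdot,0),\Delta R(\cdot,0)$ lie in $L^\infty(\Omega)$, and $1/c_0\le D_1\le c_0$ with $D_1\in C^2$, so $\mathbf P\in L^\infty(\Omega;\bbR^n)$ and $P_0\in L^\infty(\Omega)$; hence $\Phi$ meets the structural requirement in \eqref{elliptic}, while $|G|^2\le (c_0^2/r_0^2)\,w_{tt}^2(x,0)$ a.e. Moreover $F\in H^2_0(\Omega)$ has vanishing Cauchy data on $\partial\Omega$ (indeed $F\equiv0$ near $\partial\Omega$), so the boundary term $BT(F)$ in \eqref{elliptic} drops out.

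Then I would apply \eqref{elliptic} with $u=F$, this $G$, and fixed $\epsilon>0$, $\delta_0\in(0,1)$. Since $BT(F)=0$, it yields $K_{1,\tau}\int_\Omega e^{2\tau d}|\mathcal D F|^2\,d\Omega+K_{2,\tau}\int_\Omega e^{2\tau d}|F|^2\,d\Omega\le C\int_\Omega e^{2\tau d}w_{tt}^2(x,0)\,d\Omega$. By \eqref{Ks}, $K_{1,\tau}\ge c\tau$ and $K_{2,\tau}\ge c\tau^3$ for $\tau$ large, and $|\mathcal D F|^2=D_1|\nabla F|^2\ge c_0^{-1}|\nabla F|^2$; so for $\tau\ge1$,
\[
\int_\Omega e^{2\tau d}\bigl(|\nabla F|^2+|F|^2\bigr)\,d\Omega\ \le\ C\int_\Omega e^{2\tau d}w_{tt}^2(x,0)\,d\Omega.
\]
To bring back the $|\Delta F|^2$ term I would use the recast equation once more: $|\Delta F|^2\le C\,w_{tt}^2(x,0)+C\bigl(|\nabla F|^2+|F|^2\bigr)$ a.e.\ on $\Omega$, so after multiplying by $e^{2\tau d}$, integrating, and inserting the previous display for the lower-order part, one obtains $\int_\Omega e^{2\tau d}|\Delta F|^2\,d\Omega\le C\int_\Omega e^{2\tau d}w_{tt}^2(x,0)\,d\Omega$; adding the two estimates gives \eqref{ellipticcarleman}, with $C$ depending only on $\Omega,\Gamma,R,d,c_0$ but not on $\tau$.

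I expect the only delicate point to be the recasting in the first paragraph --- turning \eqref{wtt0} into a legitimate elliptic equation with $L^\infty$ energy-level coefficients and forcing obeying $|G|\lesssim|w_{tt}(\cdot,0)|$ --- since this is precisely where the positivity hypothesis \eqref{positivity1} and the support condition $\operatorname{supp}F\subset{\rm Int}\,K$ enter, together ensuring that $1/R(\cdot,0)$ is bounded on $\operatorname{supp}F$. Everything afterwards is mechanical: the Carleman constants $K_{1,\tau},K_{2,\tau}$ blow up with $\tau$, whereas the constants multiplying $\int_\Omega e^{2\tau d}(|\nabla F|^2+|F|^2)$ are $\tau$-independent, so the absorption is automatic.
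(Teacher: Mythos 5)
Your proposal is correct and follows essentially the same route as the paper: divide \eqref{wtt0} by $R(\cdot,0)$ using \eqref{positivity1}, recast it as a Riemannian elliptic equation for $F$ with respect to $g=D_1^{-1}dx^2$, apply the elliptic Carleman estimate \eqref{elliptic} with $BT(F)=0$ thanks to $\operatorname{supp}F\subset{\rm Int}\,K$, and then recover the $|\Delta F|^2$ term by using the equation once more. The only cosmetic difference is that the paper works on $K$ and replaces $K$ by $\Omega$ at the end, whereas you extend the equation trivially to all of $\Omega$ first; both are fine.
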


\begin{proof}
With the assumption (\ref{positivity1}), we may divide the equation (\ref{wtt0}) throughout by $R(x,0)$. Moreover, we write the Euclidean Laplacian $\Delta$ as the Laplace--Beltrami operator $\Delta_g$ with respect to the metric $g = D_1^{-1}(x)dx^2$ with the aforementioned formula
\begin{equation*}
\Delta F = \frac{1}{D_1(x)}\Delta_g F - D_1(x)^{\frac{n-2}{2}}\nabla (D_1^{\frac{2-n}{2}})\cdot \nabla F.
\end{equation*}
Thus we may write the equation (\ref{wtt0}) as a Riemannian elliptic equation of the form
\begin{equation}
\Delta_g F + \mbox{``lower-order terms''} = \frac{D_1(x)}{R(x,0)}w_{tt}(x,0) \qquad \text{ a.e. on } K
\end{equation}
where the ``lower-order terms'' contain the first-order and zeroth-order in $F$ with $L^{\infty}(\Omega)$ coefficients since $D_1\in C^2(\Omega)$. For this equation, with the assumption (\ref{positivity1}) and the fact that $D_1(x)$ is compactly supported in $\Omega$, we have the following Carleman estimates for the solution $F$ from (\ref{elliptic}):
\begin{equation*}
\tau\int_{K}e^{2\tau d}|\mathcal{D} F|^2\,dx + \tau^3\int_{K} e^{2\tau d}|F|^2\,dx \leq C\int_{K}e^{2\tau d}|w_{tt}(x,0)|^2\,dx + BT(F)
\end{equation*}
where the boundary terms $BT(F)\equiv 0$ since $\supp F \subset {\rm Int } K$. In addition, note from (\ref{gradientnorm}) we have 
$|\mathcal{D}F|^2 =D_1(x)|\nabla F|^2\geq \frac{1}{c_0}|\nabla F|^2$, thus we get  
\begin{equation}\label{30}
\tau\int_{K}e^{2\tau d}|\nabla F|^2\,dx + \tau^3\int_{K} e^{2\tau d}|F|^2\,dx \leq C\int_{K}e^{2\tau d}w_{tt}^2(x,0)\,dx.
\end{equation}
Then with the assumptions (\ref{regularity2}), (\ref{positivity1}) and the equation (\ref{wtt0}), we also get
\begin{equation}\label{31}
\int_{K}e^{2\tau d}|\Delta F|^2\,dx \leq C\int_{K} e^{2\tau d} w_{tt}^2(x,0)\,dx.     
\end{equation}
Combining (\ref{30}) and (\ref{31}), we get 
$$
\int_{K}e^{2\tau d} \left(|\Delta F|^2 + |\nabla F|^2 + |F|^2\right)\,dx \leq C \int_{K} e^{2\tau d}w_{tt}^2(x,0)\,dx.
$$
In this inequality, the left-hand side does not change if we replace $K$ by $\Omega$ since $\supp F \subset {\rm Int } K$. The right-hand side is bounded by $\displaystyle C \int_{\Omega} e^{2\tau d}w_{tt}^2(x,0)\,dx$. This proves (\ref{ellipticcarleman}). 
\end{proof}

To continue with the proof, note we have
\begin{lemma}
The following identity holds true:
\begin{eqnarray}\label{L1}
\int_{\Omega} e^{2\tau\varphi(x,0)} w_{tt}^2(x,0)\,dx & = & -4c\tau\int_{\Omega}\int_{-T}^0 te^{2\tau\varphi}\left[w_{tt}^2+D_1(x)|\nabla w_t|^2\right]\,dtdx \nonumber \\ 
& \ & - 4\tau\int_{\Omega}\int_{-T}^0 e^{2\tau\varphi}w_{tt}D_1(x)\nabla d(x)\cdot\nabla w_t\,dtdx \nonumber\\
& \ & - 2\tau\int_{\Omega}\int_{-T}^0 e^{2\tau\varphi}w_{tt}\nabla D_1(x)\cdot\nabla w_t\,dtdx \nonumber\\
& \ & + \int_{\Omega}e^{2\tau\varphi(x,-T)}[w_{tt}^2(x,-T)+D_1(x)|\nabla w_t(x, -T)|^2]\,dx\nonumber\\
& \ & + 2\int_{\Omega}\int_{-T}^0e^{2\tau\varphi}w_{tt}\left[2\nabla D_1(x)\cdot\nabla w_t + \Delta D_1(x) w_t\right]\,dtdx\nonumber\\
& \ & + 2\int_{\Omega}\int_{-T}^0e^{2\tau\varphi}w_{tt}\left[\Delta F(x) R_t + 2\nabla F(x)\cdot \nabla R_t + F(x) \Delta R_t\right]\,dtdx. \nonumber\\
& \ &
\end{eqnarray}
\end{lemma}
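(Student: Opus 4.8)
The plan is to read off \eqref{L1} from a one‑dimensional fundamental‑theorem‑of‑calculus argument applied to the weighted quantity
\[
I(t) := \int_{\Omega} e^{2\tau\varphi(x,t)}\,w_{tt}^2(x,t)\,dx ,
\]
so that $I(0)=I(-T)+\int_{-T}^0 I'(t)\,dt$. Since $\varphi(x,t)=d(x)-ct^2$ we have $\varphi(x,0)=d(x)$, hence the left‑hand side of \eqref{L1} is exactly $I(0)$, while $I(-T)=\int_\Omega e^{2\tau\varphi(x,-T)}w_{tt}^2(x,-T)\,dx$ accounts for the first half of the fourth line on the right. Everything therefore reduces to computing $\int_{-T}^0 I'(t)\,dt$ and identifying the terms it produces.

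Differentiating under the integral sign and using $\varphi_t=-2ct$,
\[
I'(t) = -4c\tau\int_\Omega t\,e^{2\tau\varphi}\,w_{tt}^2\,dx \;+\; 2\int_\Omega e^{2\tau\varphi}\,w_{tt}\,w_{ttt}\,dx .
\]
The first term, integrated over $(-T,0)$, supplies the $w_{tt}^2$ part of the first line of \eqref{L1}. In the second term I substitute $w_{ttt}$ from the $w_t$-equation \eqref{wt}, namely $w_{ttt}=D_1\Delta w_t+2\nabla D_1\cdot\nabla w_t+\Delta D_1\,w_t+S_t$ with $S_t=\Delta F\,R_t+2\nabla F\cdot\nabla R_t+F\Delta R_t$ (from \eqref{rhs}, $F$ being time independent). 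The contributions of $2\nabla D_1\cdot\nabla w_t+\Delta D_1 w_t$ and of $S_t$, once integrated in $t$, are already exactly the last two lines of \eqref{L1}, so they need no further work.

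It remains to treat $2\int_\Omega e^{2\tau\varphi}D_1\,w_{tt}\Delta w_t\,dx$. Integrating by parts in $x$, the boundary integral over $\Gamma$ vanishes because $w\equiv 0$ on $\Sigma_T$ forces $w_{tt}=0$ on $\Sigma_T$; expanding $\nabla\big(e^{2\tau\varphi}D_1 w_{tt}\big)$ and using $\nabla\varphi=\nabla d$ gives the three terms $-4\tau\int_\Omega e^{2\tau\varphi}w_{tt}D_1\nabla d\cdot\nabla w_t\,dx$, $-2\int_\Omega e^{2\tau\varphi}w_{tt}\nabla D_1\cdot\nabla w_t\,dx$ and $-2\int_\Omega e^{2\tau\varphi}D_1\nabla w_{tt}\cdot\nabla w_t\,dx$. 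The first two, integrated in $t$, are the second and third lines of \eqref{L1}. The last one I rewrite as $-\int_\Omega e^{2\tau\varphi}D_1\,\partial_t\!\big(|\nabla w_t|^2\big)\,dx$ and integrate by parts in $t$ over $(-T,0)$: the $t=0$ endpoint vanishes since $w_t(x,0)=0$ (hence $\nabla w_t(x,0)=0$), the $t=-T$ endpoint yields the $D_1|\nabla w_t(x,-T)|^2$ term on the fourth line, and the interior term, via $\partial_t e^{2\tau\varphi}=-4c\tau t\,e^{2\tau\varphi}$, completes the $D_1|\nabla w_t|^2$ part of the first line. Collecting everything gives \eqref{L1}.

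This is a pure computation, so there is no genuinely hard step; the only delicate point is the bookkeeping of the boundary terms and their signs — the spatial ones on $\Sigma_T$, which are killed by differentiating the homogeneous Dirichlet condition $w|_{\Sigma_T}=0$ twice in $t$, and the temporal one at $t=0$, which is killed by $w_t(x,0)=0$. To justify the integrations by parts one uses that, under \eqref{regularity2} and the hypotheses on $F$ (so $F\in H^2_0$), the solution $w$ of \eqref{w} is regular enough by the regularity theory \eqref{reg} that $w_{tt}$, $\nabla w_{tt}$ and $w_{ttt}$ lie in the relevant $L^2$ spaces; alternatively one proves the identity first for smooth data and passes to the limit.
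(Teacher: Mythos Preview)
Your argument is correct and follows essentially the same route as the paper's proof: write the left-hand side via the fundamental theorem of calculus as in \eqref{L2}, substitute $w_{ttt}$ from the $w_t$-equation \eqref{wt} as in \eqref{L3}, integrate by parts in $x$ for the $D_1\Delta w_t$ term as in \eqref{L4}, and then integrate the resulting $D_1\nabla w_{tt}\cdot\nabla w_t$ term by parts in $t$ as in \eqref{L5}. Your justification of the vanishing boundary contributions (via $w_{tt}|_{\Sigma_T}=0$ and $\nabla w_t(x,0)=0$) is in fact slightly more explicit than the paper's.
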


\begin{proof}
Starting from the left-hand side, we calculate
\begin{eqnarray}\label{L2}
\int_{\Omega} e^{2\tau\varphi(x,0)} w_{tt}^2(x,0)\,dx & = & \int_{\Omega}\int_{-T}^0\frac{d}{dt}(e^{2\tau\varphi}w_{tt}^2)\,dtdx + \int_{\Omega}e^{2\tau\varphi(x,-T)}w_{tt}^2(x,-T)\,dx \nonumber\\[2mm]
& = & - 4c\tau\int_{\Omega}\int_{-T}^0 te^{2\tau\varphi} w_{tt}^2\,dtdx + 2\int_{\Omega}\int_{-T}^0e^{2\tau\varphi}w_{tt}w_{ttt}\,dtdx \nonumber\\
& \ & + \int_{\Omega}e^{2\tau\varphi(x,-T)}w_{tt}^2(x,-T)\,dx.
\end{eqnarray}
Evaluate the second integral term on the right-hand side of (\ref{L2}), use the $w_t$-equation (\ref{wt}) and (\ref{rhs}), we have
\begin{eqnarray}\label{L3}
\int_{\Omega}\int_{-T}^0e^{2\tau\varphi}w_{tt}w_{ttt}\,dtdx & = & \int_{\Omega}\int_{-T}^0e^{2\tau\varphi}w_{tt}\left[D_1(x)\Delta w_t + 2\nabla D_1\cdot\nabla w_t + \Delta D_1(x) w_t\right]\,dtdx \nonumber \\
& \ &  + \int_{\Omega}\int_{-T}^0e^{2\tau\varphi}w_{tt}\left[\Delta F(x) R_t + 2\nabla F(x)\cdot \nabla R_t + F(x) \Delta R_t\right]\,dtdx. \nonumber \\
& \ &
\end{eqnarray}
Next we evaluate the first integral term $\displaystyle\int_{\Omega}\int_{-T}^0e^{2\tau\varphi}w_{tt}D_1(x)\Delta w_t\,dtdx$ on the right-hand side of (\ref{L3}), by using the Green's formula and the vanishing boundary condition $w=0$ on $\Sigma_T$, we get
\begin{eqnarray}\label{L4}
\int_{\Omega}\int_{-T}^0e^{2\tau\varphi}w_{tt}D_1(x)\Delta w_t\,dtdx & = & -\int_{\Omega}\int_{-T}^0\nabla(e^{2\tau\varphi}w_{tt}D_1(x))\cdot\nabla w_t\,dtdx \nonumber\\[2mm]
& = & - 2\tau\int_{\Omega}\int_{-T}^0e^{2\tau\varphi}w_{tt}D_1(x)\nabla d(x)\cdot\nabla w_t\,dtdx \nonumber \\
& \ & -\int_{\Omega}\int_{-T}^0e^{2\tau\varphi}D_1(x)\nabla w_{tt}\cdot\nabla w_t\,dtdx  \nonumber\\
& \ & - \int_{\Omega}\int_{-T}^0e^{2\tau\varphi}w_{tt}\nabla D_1(x)\cdot \nabla w_t\,dtdx.
\end{eqnarray}
Last, we calculate the second integral on the right-hand side of (\ref{L4}), use integration by parts and the zero initial condition $w_t(x,0)=0$, we get
\begin{eqnarray}\label{L5}
\int_{\Omega}\int_{-T}^0e^{2\tau\varphi}D_1(x)\nabla w_{tt}\cdot\nabla w_t\,dtdx & = & \frac{1}{2}\int_{\Omega}\int_{-T}^0e^{2\tau\varphi}D_1(x)\frac{d}{dt}(|\nabla w_t|^2)\,dtdx \nonumber \\[2mm]
& = & -\frac{1}{2}\int_{\Omega}e^{2\tau\varphi(x,-T)}D_1(x)|\nabla w_t(x,-T)|^2\,dx \nonumber\\
& \ & +\frac{1}{2}\int_{\Omega}\int_{-T}^04c\tau te^{2\tau\varphi}D_1(x)|\nabla w_t|^2\,dtdx.
\end{eqnarray}
Combine together (\ref{L2}), (\ref{L3}), (\ref{L4}) and (\ref{L5}), we readily get (\ref{L1}).
\end{proof}
From the identity (\ref{L1}), we may easily get the following estimate
\begin{eqnarray}\label{L6}
&  & \int_{\Omega} e^{2\tau\varphi(x,0)} w_{tt}^2(x,0)\,dx \nonumber \\
& \leq & C\tau\int_{Q_T}e^{2\tau\varphi}[w_{tt}^2+|\mathcal{D} w_t|^2]\,dtdx + C\int_{Q(\sigma)}e^{2\tau\varphi}w_{t}^2\,dtdx + C\int_{Q_T\backslash Q(\sigma)}e^{2\tau\varphi}w_{t}^2\,dtdx \nonumber \\
& \ & + C\int_{\Omega}e^{2\tau\varphi(x,-T)}[w_{tt}^2(x,-T)+|\mathcal{D} w_t(x,-T)|^2]\,dx \nonumber\\
& \ & +C\int_{Q_T}e^{2\tau\varphi}\left[|\Delta F|^2R_t^2 + |\nabla F|^2|\nabla R_t|^2 + |F|^2 (\Delta R_t)^2\right]\,dtdx.
\end{eqnarray}
Apply the Carleman estimate (\ref{th1carleman1}) for the first two terms on the right-hand side of (\ref{L6}), and apply $\varphi\leq \sigma$ on $Q_T\backslash Q(\sigma)$ for the third term on the right-hand side of (\ref{L6}), and also note $\varphi(x,-T) \leq -\delta$ from (\ref{pseudoproperty1}), we have for $\tau$ large enough
\begin{eqnarray}\label{L7}
\int_{\Omega} e^{2\tau\varphi(x,0)} w_{tt}^2(x,0)\,dx & \leq & BT(w_t) + C e^{2\tau\sigma}\int_{Q_T}w_t^2\,dtdx + C\tau^3e^{-2\tau\delta}[E(-T)+E(T)] \nonumber\\
& \ & + C\int_{Q_T}e^{2\tau\varphi}\left[|\Delta F|^2R_t^2 + |\nabla F|^2|\nabla R_t|^2 + |F|^2 (\Delta R_t)^2\right]\,dtdx.
\end{eqnarray}
We now estimate the terms on the right-hand side of (\ref{L7}). First, since $w=0$ on $\Sigma_T$, we may readily have from (\ref{boundary}) and the geometrical assumption (A.1) that 
\begin{equation}\label{L8}
|BT(w_t)|\leq C e^{C\tau}\left\|\partial_\nu w_t\right\|^2_{L^2(\Gamma_1\times(-T,T))}.
\end{equation}
Next, for the middle two terms of (\ref{L7}), we may apply the energy estimates (\ref{energyestimate}) to the $w_t$-equation (\ref{wt}) and use the assumption (\ref{regularity2}) to get
\begin{equation}\label{L9}
\int_{Q_T}w_{t}^2\,dtdx + E(-T) + E(T) \leq C \|F\|_{\Delta}^2.
\end{equation}
Last, to handle the last term of (\ref{L7}), we claim that we have
\begin{eqnarray}\label{L10}
& \ & \int_{Q_T}e^{2\tau\varphi}\left[|\Delta F|^2R_t^2 + |\nabla F|^2|\nabla R_t|^2 + |F|^2 (\Delta R_t)^2\right]\,dtdx \nonumber\\[2mm] 
& \leq & o(1)\int_{\Omega}e^{2\tau\varphi(x,0)}[|\Delta F|^2+|\nabla F|^2+|F|^2]\,dx
\end{eqnarray}
where the term $o(1)$ satisfies $\displaystyle\lim_{\tau\to\infty}o(1)=0$. To see this claim, we estimate the term $\displaystyle\int_{Q_T}e^{2\tau\varphi}|F|^2 (\Delta R_t)^2\,dtdx$ as follows and apply similar estimates to the other terms. Note we have
\begin{eqnarray}\label{L11}
\int_{Q_T}e^{2\tau\varphi}|F|^2 (\Delta R_t)^2\,dtdx & = & \int_{Q_T}e^{2\tau\varphi(x,0)}|F|^2e^{2\tau[\varphi(x,t)-\varphi(x,0)]}(\Delta R_t)^2\,dtdx \nonumber\\
& \leq & \int_{\Omega}e^{2\tau\varphi(x,0)}|F|^2\left(\int_{-T}^Te^{-2c\tau t^2}\left\|\Delta R_t\right\|^2_{L^{\infty}(\Omega)}\,dt\right)dx
\end{eqnarray}
Note $e^{-2c\tau t^2}\to 0$ as $\tau\to\infty$ except at $t=0$, and from assumption (\ref{regularity2}) we have $\Delta R_t\in L^{1}(Q_T)$. Thus by Dominated Convergence Theorem we have 
\begin{equation*}
\int_{-T}^Te^{-2c\tau t^2}\left\|\Delta R_t\right\|^2_{L^{\infty}(\Omega)}\,dt = 0, \ \tau\to\infty.
\end{equation*}
Plug this into (\ref{L11}), and use the similar estimates for other terms we readily get the desired claim (\ref{L10}).

Finally, note that $\varphi(x,0)=d(x)$, and $\displaystyle \min_{x\in\Omega} d(x) > \sigma > 0$ from (\ref{pseudoproperty3}), thus all the terms on the right-hand side of (\ref{L7}), except the boundary terms, can be absorbed into the left-hand side by $e^{2\tau \min d(x)}\|F\|_{\Delta}^2$. Hence we have
\begin{equation*}
\|F\|^2_{\Delta} \leq C e^{C\tau}\left\|\partial_\nu w_t\right\|^2_{L^2(\Gamma_1\times(-T,T))}.
\end{equation*}
Since $w_t$ is an odd function in $t\in(-T, T)$ (recall $w$ is evenly extended to $(-T,0)$), we get the desired stability estimate: There exists a constant $C>0$, which depends on $\tau$ exponentially,    
such that  
\begin{equation*}
\|F\|^2_{\Delta} \leq C\left\|\partial_\nu w_t\right\|^2_{L^2(\Gamma_1\times(0,T))}.
\end{equation*}

\medskip
\noindent{\bf Proof of Theorem~1.2}. Finally, we provide the proof of the stability for the original inverse problem. From the relationship (\ref{relation}) between the original inverse problem and the inverse source problem, this pretty much boils down to verify that the regularity assumption (\ref{regularity}) implies the regularity assumption (\ref{regularity2}) in Theorem~1.3. Note by the regularity theory (\ref{reg}) the assumption (\ref{regularity}) on the initial and boundary conditions $\{f, h\}$ implies the solution of the equation (\ref{1}) $u$ satisfies 
\begin{equation*}
u\in C\left([-T,T]; H^{\gamma}(\Omega)\right).
\end{equation*} 
As $\gamma>\frac{n}{2}+3$, we have the following embedding $H^{\gamma}(\Omega)\hookrightarrow W^{3,\infty}(\Omega)$ and hence the regularity assumption (\ref{regularity}) implies the corresponding regularity assumption (\ref{regularity2}) for the inverse source problem. This completes the proof.

\bigskip 
\noindent\textbf{Proof of Corollary~\ref{thm:PAT}}. 
Recall the PAT measurements induced by $(c,p_0)$ and $(\tilde{c},\tilde{p}_0)$ are $p|_{\Sigma}$ and $\tilde{p}|_{\Sigma}$, respectively. We will use the notations introduced in~\eqref{eq:variable}. 
Suppose $p_{0} c^{-2} = \tilde{p}_{0} \tilde{c}^{-2}$, that is, $f=\tilde{f}$. By Theorem~\ref{Th1}, we conclude $D=\tilde{D}$ hence $c=\tilde{c}$. It follows from the a-priori relation that $p_{0} = \tilde{p}_{0}$. This completes the proof of the corollary.



\section*{Acknowledgement}
The research of G. Uhlmann is partly supported by NSF. The research of Y. Yang is partially supported by the NSF grants DMS-2006881, DMS-2237534, DMS-2220373, and the NIH grant R03-EB033521.


\bibliographystyle{abbrv}
\bibliography{refs}

\end{document}